\newtheorem{theorem}{Theorem}[section]
\newtheorem{lemma}[theorem]{Lemma}
\theoremstyle{definition}
\newtheorem{definition}[theorem]{Definition}
\theoremstyle{remark}
\numberwithin{equation}{section}
\begin{document}

\title{Hochschild Cohomological Dimension is Not Upper Semi-Continuous}

\author{Anastasis Kratsios}
\address{Department of Mathematics, ETH Z\"{u}rich, HG G 32.3, R\"{a}mistrasse 101, 8092 Z\"{u}rich.}
\email{anastasis.kratsios@math.ethz.ch}


\subjclass[2000]{Primary 16E40; Secondary 16E40, 13C15}

\date{\today}


\keywords{Dimension Theory, Hochschild Cohomological Dimension, Algebraic Geometry, Non-Commutative Algebraic Geometry.}

\begin{abstract}
	It is shown that the Hochschild Cohomological dimension of an associative algebra is not an upper-semi continuous function, showing the semi-continuity theorem is no longer valid for non-commutative algebras.  
	A family of $\mathbb{C}$ exhibits this-algebras parameterized by $\mathbb{C}$ all but one of which has Hochschild cohomological dimension $2$ and the other having Hochschild cohomological dimension $1$.  
\end{abstract}

\maketitle


\section*{Introduction}
Typically, definitions of dimension are upper semi-continuous.  This, for example, is true of the topological dimension of a manifold and the Krull dimension of a commutative ring; or its corresponding scheme.  In this paper, it is shown that this fails to hold in the category of non-commutative $k$-algebras, over a commutative ring $k$.  

Specifically, we demonstrate, by means of a counter-example, that this is not the case for the Hochschild cohomological dimension over such a $k$-algebra.  In this paper, all rings and algebras are assumed to be unital and associative.  Denote by $HH^n(A,M)$ the $n^{th}$ Hochschild cohomology of $A$ with coefficients in $M$.  
\setcounter{section}{1}

\begin{definition}
	
	\textit{The Hochschild cohomological dimension} of a $\mathbb{C}$-algebra $A$, denoted $HCDim(A)$
	is defined as the largest natural number $n$ for which there exists an $(A,A)$-bimodule $M$ such that $HH^n(A,M)\not\cong 0$.  If no such number exists then $HCDim(A):=\infty$ \cite{cuntz1995algebra}.   
\end{definition}

The Hochschild cohomological dimension of the family of $\mathbb{C}$-algebras: 
$$
A_a:=\mathbb{C}<x,y>/(xay-ayx-x)
$$
parameterized by $a \in \mathbb{C}$ is the focus of this paper.  

\begin{theorem}\label{theorem1}
	There exists a family $\mathbb{C}$-algebras $A_a$, parameterized by $\mathbb{C}$ such that if $a\neq 0$ then $HCDim(A_a)=2$ and $HCDim(A_0)=1$.  
\end{theorem}
The proof of Theorem~\ref{theorem1} relies on the following Lemma.  
\begin{lemma} $\label{lemmmmm1}$
	$HCDim(A_1)=2$.  
\end{lemma}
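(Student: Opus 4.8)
The plan is to recognize $A_1$ as a universal enveloping algebra and then transport the entire computation into Lie algebra cohomology, where everything becomes finite-dimensional linear algebra. Setting $a=1$, the defining relation $xy-yx-x=0$ reads $[x,y]=x$, so $A_1 = U(\mathfrak{g})$, where $\mathfrak{g}$ is the unique $2$-dimensional non-abelian complex Lie algebra, with basis $\{x,y\}$ and bracket $[x,y]=x$. I would exploit the standard Chevalley--Eilenberg (Koszul) resolution of $U(\mathfrak{g})$ as a bimodule, namely the length-$2$ complex of free $U^e$-modules
\begin{equation}
0 \to U\otimes \Lambda^2\mathfrak{g}\otimes U \to U\otimes\Lambda^1\mathfrak{g}\otimes U \to U\otimes U \to U \to 0,
\end{equation}
where I write $U=A_1$ and $U^e = U\otimes U^{\mathrm{op}}$.

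\emph{Upper bound.} Since the displayed complex is a projective (indeed free) $U^e$-resolution of $U$ of length $2$, one has $HH^n(A_1,M)=\mathrm{Ext}^n_{U^e}(U,M)=0$ for every bimodule $M$ and every $n>2$. Hence $HCDim(A_1)\le 2$ is immediate, and the entire content of the lemma is the \emph{existence} of a bimodule realizing cohomological degree $2$.

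\emph{Lower bound.} Applying $\mathrm{Hom}_{U^e}(-,M)$ to the resolution and using $\mathrm{Hom}_{U^e}(U\otimes\Lambda^k\mathfrak{g}\otimes U,M)\cong \mathrm{Hom}_{\mathbb{C}}(\Lambda^k\mathfrak{g},M)$ identifies the Hochschild cochain complex with the Chevalley--Eilenberg complex, yielding the classical isomorphism $HH^n(A_1,M)\cong H^n(\mathfrak{g},M)$, where $M$ is regarded as a $\mathfrak{g}$-module via the adjoint (commutator) action $z\cdot m = zm-mz$. I would then search for a one-dimensional coefficient module. The crucial subtlety is that trivial coefficients do \emph{not} detect degree $2$: because $\mathfrak{g}$ is not unimodular, with $\mathrm{tr}\,\mathrm{ad}(y)=-1\neq 0$, one finds $H^2(\mathfrak{g},\mathbb{C})=0$. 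Instead I take $M=\mathbb{C}$ on which $x$ acts by $0$ on both sides while $y$ acts by $0$ on the left and by $1$ on the right; these are algebra characters, so $M$ is a genuine bimodule, and its adjoint action makes $y$ act by $-1$ and $x$ by $0$. Writing out the three-term complex $C^0\to C^1\to C^2$ with $C^k=\mathrm{Hom}_{\mathbb{C}}(\Lambda^k\mathfrak{g},M)$, the top differential $d^1$ sends $\phi$ to the cochain $(x,y)\mapsto x\cdot\phi(y)-y\cdot\phi(x)-\phi([x,y])$, which for this module equals $-(c+1)\phi(x)$ with $c=-1$ and hence vanishes identically. Therefore $H^2(\mathfrak{g},M)\cong C^2\cong\mathbb{C}\neq 0$, so $HH^2(A_1,M)\neq 0$ and $HCDim(A_1)\ge 2$. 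Combining the two bounds gives the claim.

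The main obstacle is not the upper bound, which is formal once the Koszul resolution is available, but rather locating a coefficient module that detects degree $2$: the failure of trivial coefficients forces a twist by a character, and the correct twist $y\mapsto -1$ is exactly the one for which the top Chevalley--Eilenberg differential degenerates. The technical points requiring care are verifying the isomorphism $HH^\bullet(U(\mathfrak{g}),-)\cong H^\bullet(\mathfrak{g},-)$ with the correct adjoint module structure, and confirming that the proposed one-dimensional $M$ is genuinely a bimodule.
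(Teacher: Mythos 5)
Your proof is correct, and it takes a genuinely different --- and, importantly, a working --- route compared with the paper's own argument. The two proofs agree on the first step (PBW identifies $A_1$ with $U(\mathfrak{g})$ for the non-abelian two-dimensional Lie algebra $\mathfrak{g}$ with $[x,y]=x$) and on using a length-two bimodule resolution to get the vanishing of $HH^n(A_1,M)$ in high degrees. They diverge on the essential point, the lower bound: to prove $HCDim(A_1)=2$ one must exhibit a bimodule $M$ with $HH^2(A_1,M)\not\cong 0$, and only your proof does this. You pass through the classical isomorphism $HH^\bullet(U(\mathfrak{g}),M)\cong H^\bullet(\mathfrak{g},M_{\mathrm{ad}})$, correctly observe that trivial coefficients fail because $\mathfrak{g}$ is not unimodular, and instead take the one-dimensional bimodule whose left action is the augmentation and whose right action is the character $x\mapsto 0$, $y\mapsto 1$ (both are genuine algebra characters, since any character must kill $x$); for the resulting adjoint action ($x$ acts by $0$, $y$ by $-1$) the top Chevalley--Eilenberg differential is $\phi\mapsto -(c+1)\phi(x)$ with $c=-1$, hence identically zero, giving $H^2(\mathfrak{g},M_{\mathrm{ad}})\cong\mathbb{C}\neq 0$. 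All of these steps check out. The paper, by contrast, never produces such an $M$: its step (3) argues only that $HH^1(A_1,A_1)\neq 0$, via a chain of isomorphisms that cannot be right as stated (it passes through $Ext^1_{A_1}(A_1,-)$, which vanishes identically because $A_1$ is free over itself), and its step (4) then asserts $HH^n(A_1,N)\cong 0$ for all $n>1$ and concludes ``$1\leq HCDim(A)<2$'' --- the opposite of the lemma it is meant to prove. The paper's displayed resolution \eqref{resollluttionnnn1} is likewise a two-term complex (so projective dimension one), a corrupted form of the genuine Chevalley--Eilenberg bimodule resolution, which has three projective terms as in your display. So your proposal is not merely an alternative: it supplies the lower-bound argument that the paper's proof is missing (and in fact contradicts), and it is the argument a corrected version of the paper would need.
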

\begin{proof} The Poincar\'{e}-Birkhoff-Witt theorem implies $A_1$ is isomorphic to the universal enveloping algebra of the $2$-dimension complex lie algebra $\mathfrak{g}$ with $\mathbb{C}$-basis $\{ x,y \}$ who's Lie bracket relations is described on basis elements as $[x,x]=[y,y]=0$ and $[x,y]=-[y,x]=x$ \cite{2001first}.  
	Therefore there is an $A_1$-projective resolution of $A_1$ of length $2$ \cite{weibel2008introduction}:
	\begin{equation}
	0 \leftarrow A_1 \overset{d^0}{\leftarrow} A_1 \otimes_{\mathbb{C}} \mathbb{C}^2 \overset{d^1}{\leftarrow} A_1 \otimes_{\mathbb{C}} \mathbb{C} \leftarrow 0 
	\label{resollluttionnnn1}
	\end{equation}
	Hence, there are isomorphisms:
	\begin{equation}
	HH^1(A_1,A_1)\cong Ext^1_{A_1^e}(A_1,A_1)\cong
	Hom_{\mathbb{C}}(A_1,A_1);
	\label{eqqqq2111}
	\end{equation}
	where the first isomorphisms are described in \cite{weibel2008introduction} and the last arise from the calculation of the homology of $Hom_{\mathbb{C}}(-,Hom_{\mathbb{C}}(A_1,A_1))$ via to the $A_1$-projective resolution~\eqref{resollluttionnnn1}.  
	
	Since $A_1$ is an infinite dimension $\mathbb{C}$-vector space, $Hom_{\mathbb{C}}(A_1,A_1)\not\cong 0$, whence $HH^1(A_1,A_1)\not\cong 0$.  Therefore, $1\leq HCDim(A)$.  Next, the resolution $\eqref{resollluttionnnn1}$ implies that for $n>1$ there are isomorphisms: 
	\begin{equation*}
	HH^n(A_1,N)\cong Ext^n_{A_1^e}(A_1,N)\cong Ext^n_{A_1}(A_1,Hom_{\mathbb{C}}(A_1,N))
	\cong 0
	\end{equation*}
	From which it follows that, $1\leq HCDim(A)<2$.  
\end{proof}
\begin{proof}[Proof of Theorem~\ref{theorem1}]
	$A_0 = \mathbb{C}<x,y>/(x)\cong \mathbb{C}<y>\cong\mathbb{C}[x]$.  Since $\mathbb{C}[x]$ is a regular $\mathbb{C}$-algebra of finite type it satisfies Van Den Bergh duality in dimension $1$ \cite{van1998relation}; together with the Hochschild-Kronstadt-Rosenberg theorem \cite{weibel2008introduction} this means there are natural isomorphisms for every $(\mathbb{C}[x],\mathbb{C}[x])$-bimodule $M$: \begin{equation*}
		HH^{n}(\mathbb{C}[X], M) \cong HH_{1-n}(\mathbb{C}[X], Der_{\mathbb{C}}(\mathbb{C}[x],\mathbb{C}[x])\otimes_{\mathbb{C}[x]} M) 
		\cong HH_{1-n}(\mathbb{C}[X], M)
		.
		\end{equation*} 
		In particular, this implies the Hochschild cohomology vanishes for $n>1$ and 
		\begin{equation*}
		HH^1(\mathbb{C}[X],\mathbb{C}[X])\cong HH_0(\mathbb{C}[X],\mathbb{C}[x])\cong Z(\mathbb{C}[X])=\mathbb{C}[X]. 
		\end{equation*}
		Thus, $HCDim(A_0)=1$.  

On the other hand if $a\neq 0$, $A_a$ is isomorphic to $A_1:=\mathbb{C}\langle x,y\rangle/(xy-yx-x)$ via the $\mathbb{C}$-algebra isomorphisms $\psi_a: A_a \rightarrow A_1$ mapping $y \mapsto ay$ (with inverse $y \mapsto \frac{1}{a}y$).  

Hochschild cohomology is functorial \cite{weibel2008introduction}, therefore for every $(A,A)$-bimodule $M$, for every $n\in \mathbb{N}$ and $a\in \mathbb{C}- \{ 0 \}$ $\psi_a$ induces isomorphisms 
\begin{equation*}
HH^n(\psi_a,M): HH^n(A_1,M) \rightarrow HH^n(A_a,M).  
\label{hcdimmmm1isos}
\end{equation*}  
Whence $HCDim(A_a)=HCDim(A_1)$ for all $a\in \mathbb{C}- \{ 0 \}$ .  
Therefore it is enough to compute the Hochschild cohomological dimension of $A_1$.  Hence, Lemma~\ref{lemmmmm1} entails $HCDim(A_a)=2$ for all $a\in \mathbb{C}-\{ 0 \} $.  Hence $HCDim(A_a)=2$ if $a \in \mathbb{C}- \{ 0 \}$.  
\end{proof}
\section{Conclusion}
In this short article, we have shown that the Hochschild cohomological dimension does not share some of the critical properties of most standard algebraic and topological dimensions.  Namely, we have demonstrated the existence of a family of algebras which fail to exhibit the familiar \textit{upper semi-continuity of dimension} property.  The construction of this counter-example hinged on within the category of non-commutative algebras.  
\nocite{*}
------------------------
\bibliographystyle{amsplain}

\end{document}